\documentclass{amsart}

\usepackage{amsmath,amsthm}
\usepackage{amssymb}

\usepackage[english,francais]{babel}



\newcommand{\comment}[1]{}

\newtheorem{theorem}{Theorem}
\newtheorem{definition}[theorem]{Definition}

\newtheorem{problem}[theorem]{Problem}

\newtheorem{remark}[theorem]{Remark}

\newcommand{\NN}{\mathbb N}
\newcommand{\ZZ}{\mathbb Z}
\newcommand{\RR}{\mathbb R}

\newcommand{\TT}{\mathbb T}
\newcommand{\TP}{{\mathcal T}^{+}}

\newcommand\PP{\mathcal{P}}


\newcounter{rev}
\setcounter{rev}{0}

\newcounter{reb}
\setcounter{reb}{0}


\begin{document}
\centerline{}


\selectlanguage{english}
\title{Failure of Wiener's property for positive definite periodic
functions}


\selectlanguage{english}
\author[A. Bonami]{Aline Bonami}
\email{aline.bonami@univ-orleans.fr}
\author[S. Revesz]{Szil\'ard Gy. R\'ev\'esz}
\email{revesz@renyi.hu}

\address[A. Bonami]{F\'ed\'eration Denis Poisson. MAPMO-UMR 6628,
D\'epartement de Math\'ematiques, Universit\'e d'Orl\'eans, 45067
Orl\'eans Cedex 2, France}
\address[S. Revesz]{
 R\'enyi Institute of Mathematics, Hungarian Academy of Sciences,
 Budapest, P.O.B. 127, 1364 Hungary.}

\begin{abstract}
\selectlanguage{english}

We say that Wiener's property holds for the exponent $p>0$ if we
have that whenever a positive definite function $f$ belongs to
$L^p(-\varepsilon,\varepsilon)$ for some $\varepsilon>0$, then $f$
necessarily belongs to $L^p(\TT)$, too. This holds true for $p\in
2\NN$ by a classical result of Wiener.

Recently various concentration results were proved for idempotents
and positive definite functions on measurable sets on the torus.
These new results enable us to prove a sharp version of the
failure of Wiener's property for $p\notin 2\NN$. Thus we obtain
strong extensions of results of Wainger and Shapiro, who proved
the negative answer to Wiener's problem for $p\notin 2\NN$.

\medskip

\selectlanguage{francais}


  \noindent{\bf Contre-exemples \`a la
propri\'et\'e de Wiener pour les fonctions p\'eriodi--ques
d\'efinies-positives.}

 \noindent{\bf R\'esum\'e.} On dit que l'exposant $p$
poss\`ede la propri\'et\'e de Wiener si toute fonction
p\'eriodique d\'efinie-positive qui est de puissance $p$-i\`eme
int\'egrable au voisinage de $0$ l'est sur un intervalle de
p\'eriode. C'est le cas des entiers pairs, d'apr\`{e}s un r\'{e}sultat
classique de Wiener.

Nous avons r\'ecemment obtenu des ph\'enom\`enes de concentration
des polyn\^omes idempotents ou d\'efinis-positifs sur un ensemble
mesurable du tore qui nous permettent de donner une version forte
du fait que les exposants $p\notin 2\NN$ n'ont pas la
propri\'et\'e de Wiener, am\'eliorant ainsi les r\'esultats de
Wainger et Shapiro.

\end{abstract}


\selectlanguage{english}

\maketitle

\section{Introduction}\label{sec:intro}

Let $f$ be a periodic integrable function which is positive
definite, that is, has non negative Fourier coefficients. Assume
that it is bounded (in $\|\cdot\|_\infty$) in a neighborhood of
$0$, then it necessarily belongs to $L_\infty(\TT)$, too. In fact,
its maximum is obtained at $0$ and, as $f(0)=\sum_k
\widehat{f}(k)$,  $f$ has an absolutely convergent Fourier series.

The same question can be formulated in any $L^p$ space. Actually,
the following question was posed by Wiener in a lecture, after he
proved the $L^2$ case. We refer to \cite{Sh} for the story of this
conjecture, see also \cite{L}, \cite{Sh} and \cite{W}.

\begin{problem}[Wiener]\label{Wienerproblem} Let $1\le p<\infty$.
Is it true, that if for some $\varepsilon>0$ a positive definite
function $f\in L^p(-\varepsilon,\varepsilon)$, then we necessarily
have $f\in L^p(\TT)$, too?
\end{problem}

The observation that the answer is positive if $p\in 2\NN$ has
been given by Wainger \cite{Wa}, as well as by Erd\H os and Fuchs
\cite{EF}. We refer to Shapiro \cite{Sh} for the proof, since the
constant given by his proof is in some sense optimal, see
\cite{L,L2}. Generalizations in higher dimension may be found in
\cite {Hl} for instance. It was shown by Shapiro \cite{Sh} and
Wainger \cite{Wa} that the answer is to the negative for all other
values of $p$. Negative results were obtained for groups in e.g.
\cite{F} and \cite{L}.

There is even more evidence that the Wiener property must hold
when $p=2$ and we prescribe large gaps in the Fourier series of
$f$. Indeed, in this case by well-known results of Wiener and
Ingham, see e.g. \cite{W,Z}, we necessarily have an essentially
uniform distribution of the $L^2$ norm on intervals longer than
the reciprocal of the gap, even without the assumption that $f$ be
positive definite. As Zygmund pointed out, see the Notes to
Chapter V  \S 9, page 380 in \cite{Z}, Ingham type theorems were
not known for $p\ne 2$, nevertheless, one would feel that
prescribing large gaps in the Fourier series should lead to better
control of the global behavior by means of having control on some
subset like e.g. $(-\varepsilon,\varepsilon)$. So the analogous
Wiener question can be posed restricting to positive definite
functions having gaps tending to $\infty$. However, we answer
negatively as well. In this strong form the question, to the best
of our knowledge, has not been dealt with yet. Also we are able to
replace the interval $(-\varepsilon, +\varepsilon)$ by any
measurable symmetric subset $E$ of the torus of measure $|E|<1$.
Neither extension can be obtained by a straightforward use of the
methods of Shapiro and Wainger.

\section{$L^2$ results and concentration of integrals}\label{sec:concentration}

We use the notation $\TT:=\RR/\ZZ$ for the torus. Then
$e(t):=e^{2\pi i t}$ is the usual exponential function adjusted to
interval length $1$, and we  denote $e_h$ the function $e(hx)$.
The set of positive definite trigonometrical polynomials is the
set
\begin{equation}\label{eq:posdefpol}
\TP:=\left\{ \sum_{h\in H}a_k e_k ~:~ H\subset \ZZ \quad
(\textrm{or}~~ \NN), ~~ \# H< \infty, \quad a_k\geq 0 ~(k\in H)
\right\}
\end{equation}
For obvious reasons of being convolution idempotents, the set
\begin{equation}\label{eq:idempotents}
\PP:=\left\{ \sum_{h\in H}e_h ~:~ H\subset \ZZ \quad (\textrm{or}
~~\NN), ~~ \# H< \infty \right\}
\end{equation}
is called the set of \emph{(convolution-)idempotent exponential
(or trigonometric) polynomials}, or just \emph{idempotents} for
short.

Note that multiplying a polynomial by an exponential $e_K$ does
not change its absolute value, and the property of belonging to
$\PP$ or $\TP$ is not changed either. Therefore, it suffices to
consider polynomials with nonnegative spectrum, i.e. $H\subset
\NN$ in \eqref{eq:posdefpol} and \eqref{eq:idempotents}.

Also note that  for a positive definite function the function
$|f|$ is necessarily even. This is why we consider $0$-symmetric
(or, just symmetric for short) intervals or sets, (alternatively,
we could have chosen to restrict to $[0,1/2)$ instead of $\TT$).

Let us first state the theorem on positive definite functions in
$L^2$. Recall that the direct part is attributed to Wiener, with
the constant given by Shapiro in \cite{Sh}. The converse seems to
be well known (see \cite{L,L2}), except, may be, for the fact that
counter-examples may be given by idempotents. The fact that the
Wiener property fails for arbitrary measurable sets is, to the
best of our knowledge, new.

\begin{theorem}[Wiener, Shapiro] \label{th:shapiro} For $p$ an
even integer, for $0<a<1/2$ and for $f\in \TP$, we have the
inequality
\begin{equation}\label{shapiro}
\frac 1{2a}\int_{-a}^{+a}|f|^p\geq \frac 12
\int_{-1/2}^{+1/2}|f|^p.
\end{equation}
Moreover, the constant $1/2$ cannot be replaced by a smaller one,
even when restricting to idempotents. Indeed, for each integer
$k>2$, for $a<1/k$ and for $b>1/k$, there exits an idempotent $f$
 and such that $\int_{-a}^{+a}|f|^p\leq
b\times \int_{-1/2}^{+1/2}|f|^p$.
\end{theorem}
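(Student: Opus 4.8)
The plan is to treat the two assertions of Theorem~\ref{th:shapiro} separately, proving the inequality \eqref{shapiro} by a positive-kernel argument and the sharpness by an explicit lacunary idempotent. For the inequality, the first observation is that for even $p=2m$ the function $g:=|f|^p=(f\overline f)^m$ is again positive definite: writing $f=\sum_{h}a_he_h$ with $a_h\ge 0$, the coefficients of $f\overline f$ are the nonnegative autocorrelations $\sum_\ell a_\ell a_{\ell-n}$, and taking the $m$-th power convolves nonnegative sequences, so $\widehat g(n)=:c_n\ge 0$. I would then reduce the claim to producing a single test function: if $K$ is supported in $[-a,a]$ with $0\le K\le 1$ and $\widehat K(n)\ge 0$ for all $n$, then from $g\ge 0$ one obtains
\begin{equation*}
\int_{-a}^{a}g \;\ge\; \int_{\TT}Kg=\sum_{n}\widehat K(n)\,c_n\;\ge\;\widehat K(0)\,c_0=\widehat K(0)\int_{-1/2}^{1/2}g,
\end{equation*}
where the middle equality is Parseval and the last step merely discards nonnegative terms. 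Everything thus comes down to finding such a $K$ with $\widehat K(0)=\int K$ as large as possible.

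The natural, and by Cauchy--Schwarz optimal, choice is the tent function $K(x)=(1-|x|/a)_+$, which equals $\tfrac1a\,\mathbf 1_{[-a/2,a/2]}\ast\mathbf 1_{[-a/2,a/2]}$; hence $\widehat K=\tfrac1a\,\bigl|\widehat{\mathbf 1_{[-a/2,a/2]}}\bigr|^2\ge 0$, while $0\le K\le 1$ and $\widehat K(0)=a$. Substituting this would give $\int_{-a}^a g\ge a\int_{-1/2}^{1/2}g$, which is exactly \eqref{shapiro}. The only genuinely structural inputs are the positive definiteness of both $g$ and $K$; the remainder is bookkeeping.

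For sharpness I would take the lacunary idempotent $f(x)=\sum_{j=0}^{M-1}e(jkx)=D(kx)$, where $D(y):=\sum_{j=0}^{M-1}e(jy)$ is a Dirichlet-type idempotent, so that $|f|^p=|D(kx)|^p$ has period $1/k$. The decisive point is that for $p>1$ the mass of $|D|^p$ concentrates at the origin as $M\to\infty$: one checks $\int_{\delta<|y|<1/2}|D|^p=O(1)$ while $\int_{-1/2}^{1/2}|D|^p\asymp M^{p-1}\to\infty$, so the fraction of mass inside $(-\delta,\delta)$ tends to $1$. After rescaling, $|f|^p$ concentrates near the points of $\tfrac1k\ZZ$, and since $a<1/k$ the interval $[-a,a]$ captures exactly one of these $k$ peaks per period; writing $y=kx$,
\begin{equation*}
\int_{-a}^{a}|f|^p=\frac1k\int_{-ka}^{ka}|D|^p=\frac1k\bigl(1+o(1)\bigr)\int_{-1/2}^{1/2}|D|^p=\frac1k\bigl(1+o(1)\bigr)\int_{-1/2}^{1/2}|f|^p
\end{equation*}
as $M\to\infty$, the constraint $ka<1$ ensuring that no neighbouring peak is collected. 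Hence the ratio tends to $1/k<b$, and for $M$ large enough the required estimate $\int_{-a}^a|f|^p\le b\int_{-1/2}^{1/2}|f|^p$ holds.

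I expect the main obstacle to be the sharpness part, specifically the concentration estimate for $\int_{-ka}^{ka}|D|^p$: one must verify both that essentially all of the central mass lies inside $[-ka,ka]$ and that the tails of the adjacent peaks near $\pm1$ contribute only $O(1)$, hence are $o(M^{p-1})$ relative to $\int_{-1/2}^{1/2}|D|^p\asymp M^{p-1}$, so that the error is truly $o(1)$. The hypothesis $a<1/k$ enters precisely here, keeping the endpoint $ka$ at a fixed positive distance $1-ka$ from the peak at $1$; it is also the source of the factor-$2$ gain over the naive ``uniform distribution'' ratio $2a$, and thus of the sharpness of the constant $1/2$ in \eqref{shapiro}.
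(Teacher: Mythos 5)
Your proposal is correct and follows essentially the same route as the paper: your sharpness example $f(x)=\sum_{j=0}^{M-1}e(jkx)=D(kx)$ is exactly the paper's idempotent $D_n*\mu_k$ (convolving with the mean $\mu_k$ of Dirac masses at the $k$-th roots of unity simply restricts the Dirichlet spectrum to the multiples of $k$), and your estimate that the peaks of $|D|^p$ away from $ka$ contribute $O(1)$ against $\int|D|^p\asymp M^{p-1}$ is the computation the paper leaves implicit in the claim that the ratio tends to $1/k$. For the inequality \eqref{shapiro} the paper merely cites Shapiro, and your argument --- positive definiteness of $|f|^{2m}$ together with the minorant $K(x)=(1-|x|/a)_+$ satisfying $0\le K\le \mathbf{1}_{[-a,a]}$, $\widehat K\ge 0$ and $\widehat K(0)=a$ --- is a correct, self-contained reconstruction of that proof.
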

\begin{proof} We refer to Shapiro for the proof of the
inequality \eqref{shapiro}.

To show sharpness of the constant, let us now give an example,
inspired by the examples of \cite{DPQ}. We take $f:=D_n*\mu_k$,
where $D_n$ is the Dirichlet kernel, defined here as
\begin{equation}\label{eq:Dndef}
D_n(x):=\sum_{\nu=0}^{n-1} e(\nu x) = e^{\pi i(n-1)x/2}
\frac{\sin(\pi n x)}{\sin(\pi x)},
\end{equation}
and $\mu_k$ is the mean of Dirac masses at each $k$-th root of
unity. Both have Fourier coefficients $0$ or $1$, so that $f$ is
an idempotent. Only one of the point masses of $\mu_k$ lies inside
the interval $(-a,+a)$ and one can see that the ratio between
$\int_{-a}^{+a}|f|^p$  and $\int_{-1/2}^{+1/2}|f|^p$ tends to
$1/k$ when $n$ tends to infinity.
\end{proof}
\begin{remark} The interval $(-a,+a)$ cannot be replaced by a
measurable set $E$ having $0$ as a density point, even if $|E|$ is
arbitrarily close to $1$. Indeed, assume that the complement of
$E$ is the union (modulo $1$) of all intervals of radius $1/l^3$
around all irreducible rational numbers $k/l$, with $k$ different
from $0$ and $l>L$. Then $E$ has the required properties, while,
 for the same idempotent $f:=D_n*\mu_l$, the ratio between $\int_E|f|^p$  and
 $\int_{-1/2}^{+1/2}|f|^p$ tends to $1/l$ when $n$ tends to infinity.
 We get our conclusion noting that $l$ may be arbitrarily
 large.
 \end{remark}

Let us now consider the $p$-concentration problem, which comes
from  the following definition.

\begin{definition}
Let $p>0$, and $\mathcal F$ be a class of functions on $\TT$. We
say that for the class $\mathcal F$ there is $p$-concentration if
there exists a constant $c>0$ so that for any symmetric measurable
set $E$ of positive measure one can find an idempotent
$f\in{\mathcal F}$ with
\begin{equation}\label{eq:Lpconcentration}
\int_E |f|^p \geq c \int_\TT |f|^p.
\end{equation}
\end{definition}

The problem of $p$-concentration on the torus for idempotent
polynomials has been considered in \cite{DPQ}, \cite{DPQ2},
\cite{CRMany}. It was essentially solved recently in \cite{BR}.
Also, the weaker question of concentration of $p^{\textrm th}$
integrals of positive definite functions has been dealt with
starting with the works \cite{DPQ,DPQ2}. In this respect we have
proved the following result, see \cite[Theorem 48]{BR}. We will
only state that part of the theorems of \cite{BR} that we will
use.

\begin{theorem}\label{th:concentration} For all $0<p<\infty$, $p$
not an even integer,  whenever a $0$-symmetric measurable set $E$
of positive measure $|E|>0$ is given, then to all $\varepsilon>0$
there exists some positive definite trigonometric polynomial
$f\in\TP$
so that
\begin{equation}\label{eq:concentration}
\int_{^cE} |f|^p \leq\varepsilon \int_\TT |f|^p.
\end{equation}
Moreover, $f$ can be taken with arbitrarily large prescribed gaps
between frequencies of its Fourier series.
\end{theorem}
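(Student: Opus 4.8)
The statement is quoted from \cite[Theorem 48]{BR}, so formally the proof is a reference; nonetheless, here is how I would organise a self-contained argument, building on the prototype already visible in the proof of Theorem \ref{th:shapiro}. Since $|f|$ is automatically even for $f\in\TP$, it suffices to place the $L^p$-mass on the symmetric part of $E$. I would first fix a Lebesgue density point $t_0$ of $E$; the case $t_0=0$ is easy, because a high power of a Fej\'er kernel already gives $\int_{(-\delta,\delta)}|f|^p\ge(1-\varepsilon)\int_\TT|f|^p$ for every $p>0$, so the whole difficulty is genuinely when $E$ stays away from the origin. For small $\delta$ the set $E$ fills all but an arbitrarily small fraction of $I:=(t_0-\delta,t_0+\delta)\cup(-t_0-\delta,-t_0+\delta)$, and the plan is to produce $f\in\TP$ whose $p$-th power mass is (i) almost entirely carried by $I$ and (ii) roughly equidistributed inside $I$, so that the portion of it on the thin leftover $^cE\cap I$ is controlled by the small relative measure $|^cE\cap I|/|I|$. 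Combining (i) and (ii) yields \eqref{eq:concentration}.

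The base construction should be modelled on the idempotent $D_n*\mu_k$ of Theorem \ref{th:shapiro}, whose modulus $|D_{n/k}(kx)|$ splits the mass of $|f|^p$ into $k$ essentially equal lumps sitting at the points $j/k$. Approximating $\pm t_0$ by $\pm j_0/k$ places two of these lumps where we want them, but a pure dilate has frequencies in $k\ZZ$, hence is $1/k$-periodic, so it weights all $k$ lumps equally and gives only the fraction $2/k$ near $\pm t_0$. To favour the lumps near $\pm t_0$ one must break this $k\ZZ$-symmetry by admitting frequencies not all divisible by $k$, and it is exactly here that $p\notin2\NN$ has to enter: for $f\in\TP$ one always has $\max|f|=f(0)$, so the peak of $|f|$ never leaves $0$, and for even $p$ inequality \eqref{shapiro} forbids hiding more than half of the $L^p$-mass away from the origin. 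The escape for $p\notin2\NN$ is that $|f|^p$ is no longer of the form $|f^{p/2}|^2$ with nonnegative spectrum: its Fourier coefficients may be negative (their signs being governed by those of the coefficients of $|2\sin\pi t|^p$), the positivity argument underlying \eqref{shapiro} collapses, and the $L^p$-mass can be redistributed onto a neighbourhood of a nonzero point. I would exploit this to obtain $\int_I|f|^p\ge c_0\int_\TT|f|^p$ for some fixed $c_0>0$ \emph{independent} of the fineness $k$ of the approximation.

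The fixed fraction $c_0$ is then upgraded to $1-\varepsilon$ by a self-improvement/tensorisation step, replacing $f$ by a lacunary product $\prod_i g(b_i x)$ with $b_1<b_2<\cdots$ growing fast. By Weyl equidistribution the joint law of $(b_1x,\dots,b_dx)$ is asymptotically the product measure on $\TT^d$, so the statistics of the product match those of a tensor power and the global ratio $\int_{^cE}|f|^p/\int_\TT|f|^p$ can be driven below a fixed power $(1-c_0')^d$, hence to $0$; lacunarity simultaneously guarantees that the product stays in $\TP$ (no cancellation manufactures negative coefficients, as in Riesz products) and that its frequencies are widely separated. A final dilation $x\mapsto Nx$ multiplies every frequency by $N$ and delivers the prescribed arbitrarily large gaps, so the gap clause is obtained essentially for free from the amplification.

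The main obstacle is precisely the conjunction of the two requirements above: one must reach concentration arbitrarily close to $1$ on a set that may lie \emph{entirely away} from $0$, which \eqref{shapiro} shows is outright impossible for even $p$, so no estimate can be uniform as $p\to 2m$ and every bound must be quantitatively tied to $p\notin2\NN$. Concretely, keeping the $L^p$-mass equidistributed inside $I$ (so that the measure-small but potentially $|f|^p$-large leftover $^cE\cap I$ stays negligible) while at the same time suppressing the central peak and pushing the global ratio to $1$, all under the constraint of a nonnegative spectrum, is the delicate balance. I expect this transfer step --- reconciling the product amplification with \emph{localisation on $E$} rather than on a multi-scale preimage --- to be the hard part, and it is what makes \cite[Theorem 48]{BR} substantially deeper than the classical even-integer case.
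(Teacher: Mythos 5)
You are right that the paper offers no internal proof of this statement: it is imported verbatim from \cite[Theorem 48]{BR}, so there is nothing in the text to compare against, and your outline does track the broad strategy of that source (a building block beating the uniform share near a rational point $\pm a/q$, with $p\notin 2\NN$ entering essentially; Riesz-product--type amplification by dilates; a density-point reduction for measurable $E$). But as a proof the proposal has genuine gaps at exactly the decisive points. First, the building block is never constructed: you observe correctly that for $p\notin 2\NN$ the positivity mechanism behind \eqref{shapiro} collapses, but that only removes the obstruction --- it does not produce a $c_0>0$, uniform in $k$, with $\int_I|f|^p\geq c_0\int_\TT|f|^p$ for $I$ a pair of short intervals around $\pm j_0/k$. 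In \cite{BR} this is the technical heart of the matter (explicit few-term positive definite polynomials whose $p$-th power mass near $a/q$ strictly exceeds the fair share $1/q$ of the total, established by an asymptotic analysis that genuinely fails at even $p$), and nothing in your sketch substitutes for it. Second, the amplification step as described does not localise where you need it: the product $\prod_i g(b_i x)$ is large on the set where \emph{all} the $b_i x$ fall in $I$, which is a multi-scale Cantor-like preimage, not a neighbourhood of $\pm t_0$. You flag this yourself as ``the hard part,'' but flagging it is not resolving it; the resolution is to peak the block exactly at rationals with denominator $q$ and to restrict the dilation factors to residues $\equiv \pm 1 \pmod q$, so that every factor peaks again at $a/q$ and the concentration point is preserved rather than scattered.

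Relatedly, the claim that a final dilation $x\mapsto Nx$ ``delivers the prescribed gaps for free'' is false as stated: dilating a polynomial already concentrated on $E$ redistributes its $p$-th power mass over the $N$ preimages of its peaks, almost all of which may lie in $^cE$, so this last step destroys exactly the property you have just built. The gap condition has to be arranged inside the construction (for instance by taking even the smallest dilation factor large, still in the admissible residue class mod $q$). So while the architecture of your outline is the right one, what you have written is a plan rather than a proof, and the two steps it leaves open are precisely the ones that make \cite[Theorem 48]{BR} nontrivial.
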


\begin{remark} The same result is also proved for open
symmetric sets and idempotents, and for measurable sets and
idempotents when $p>1$.
\end{remark}

Theorem \ref{th:concentration} allows to see immediately that
there is no inequality like \eqref{shapiro} for $p$ not an even
integer. What is new, compared to the results of Shapiro and
Wainger, is the fact that this is also the case if $f$ has
arbitrarily large gaps, and that we can replace intervals
$(-a,+a)$ by arbitrary measurable sets of measure less than $1$.
We will give a different statement in the next section for $E$ an
open set, and also show a strong version of the negative state of
Wiener's problem.

\section{Negative results in Wiener's problem}\label{sec:results}

Let us start with somewhat strengthening the previous theorem for
open sets, which we obtain by an improvement of the methods of
Shapiro in \cite{Sh}.
\begin{theorem}\label{th:strong-conc} For all $0<q\leq p<2$,  whenever a $0$-symmetric open set $E$
of positive measure $|E|>0$ is given, then for all $\varepsilon>0$
there exists some positive definite trigonometric polynomial
$f\in\TP$
so that
\begin{equation}\label{eq:strong-conc}
\int_{^cE} |f|^p \leq\varepsilon \left (\int_\TT
|f|^q\right)^{p/q}.
\end{equation}
The same is valid for $q<p$ with $p$ not an even integer, provided
that $q$ is sufficiently close to $p$, that is $q>q(p)$, where
$q(p)<p$.
\end{theorem}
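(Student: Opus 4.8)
The plan is to prove the result by an explicit construction, using Theorem~\ref{th:concentration} to produce a polynomial that is already concentrated on $E$ in the ordinary $L^p$ sense, and then to sharpen the estimate so as to replace $\int_\TT|f|^p$ on the right of \eqref{eq:concentration} by the smaller quantity $\bigl(\int_\TT|f|^q\bigr)^{p/q}$. Since on the probability space $\TT$ one always has $\bigl(\int_\TT|f|^q\bigr)^{p/q}\le\int_\TT|f|^p$ when $q\le p$, inequality \eqref{eq:strong-conc} is genuinely stronger than \eqref{eq:concentration}, and the whole difficulty is to guarantee that the $L^q$-norm of the constructed polynomial is comparable to its $L^p$-norm, that is, that $f$ is sufficiently \emph{flat} on the region carrying its mass.

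I would first treat the model case in which $E$ contains an interval around one of its points, to isolate the mechanism. Writing the Fej\'er kernel as $K_N=\frac1N|D_N|^2$ and taking $f=e_{mN}K_N^{\,m}\in\TP$, whose modulus is the bump $K_N^{\,m}$ of height $\sim N^m$, width $\sim 1/N$ and tail $K_N(x)^m\lesssim (Nx^2)^{-m}$, a direct computation gives $\int_\TT|f|^q\sim N^{mq-1}$ and, for such a fixed $E$, $\int_{^cE}|f|^p\le C_E\,N^{-mp}$. Hence
$$\frac{\int_{^cE}|f|^p}{\bigl(\int_\TT|f|^q\bigr)^{p/q}}\ \le\ C_E\,N^{\,p(1/q-2m)}\ \xrightarrow[N\to\infty]{}\ 0\qquad\text{as soon as }m>\tfrac1{2q}.$$
This already yields \eqref{eq:strong-conc} for every $0<q\le p$ in this situation, with no restriction on $p$: the powers $m$ of the Fej\'er kernel supply the required flatness.

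The real task is a general open symmetric $E$, whose measure may be arbitrarily small, so that no single peak can be placed inside $E$ while the remaining mass avoids $^cE$. Here I would invoke Theorem~\ref{th:concentration} in its open-set form to obtain $f\in\TP$ with $\int_{^cE}|f|^p\le\delta\int_\TT|f|^p$; the essential point, which the model argument cannot achieve, is that this construction distributes the mass \emph{unevenly}, favouring the rational points of $E$. Arranging, as the construction permits, that the mass sits on peaks $A\subset E$ with $|f|\ge c\Linf{f}$, one lower-bounds $\bigl(\int_\TT|f|^q\bigr)^{p/q}\gtrsim(\Linf{f}^{\,q}|A|)^{p/q}$ and upper-bounds $\int_{^cE}|f|^p\le\delta\,\Linf{f}^{\,p}|A|$, so that the ratio is $\lesssim\delta\,|A|^{1-p/q}$. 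Since $p/q\ge1$, the factor $|A|^{1-p/q}$ blows up as the degree grows and $|A|\to0$, and everything reduces to whether the concentration quality $\delta$ decays fast enough to beat it.

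This balance is exactly where the dichotomy of the statement originates, and it is the step I expect to be the main obstacle. For $p<2$ I would route the estimate through $L^2$: the exponent-$2$ form of Theorem~\ref{th:shapiro}, made uneven on $E$ in the spirit of \cite{Sh}, controls $\int_{^cE}|f|^2$ with a rate independent of $q$, after which a H\"older/Nikolskii step transfers the control to $\int_{^cE}|f|^p$ for \emph{every} $q\le p<2$, removing any restriction. For $p$ not an even integer with $p\ge2$ no such $L^2$ bridge is available, and one only has $\delta\sim|A|^{\gamma}$ with a fixed $\gamma=\gamma(p)>0$ coming from the tail exponent of the kernel; the ratio then tends to $0$ precisely when $\gamma>(p-q)/q$, that is $q>q(p):=p/(1+\gamma)$. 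Checking that the construction of Theorem~\ref{th:concentration} indeed furnishes flat peaks together with a power-type rate $\delta$, and pinning down the optimal $\gamma$, is the part that I anticipate will demand the most care.
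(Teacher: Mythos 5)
Your proposal does not follow the paper's route, and as it stands it has two genuine gaps. First, the plan of post-processing Theorem \ref{th:concentration} cannot succeed: that theorem is a pure existence statement and provides no quantitative link between the concentration quality $\delta$ and the ``flatness'' data $\Linf{f}$, $|A|$ that your ratio $\delta\,|A|^{1-p/q}$ requires. Worse, the polynomials actually produced for Theorem \ref{th:concentration} are built from highly peaked kernels, for which $\int_\TT|f|^p\gg\bigl(\int_\TT|f|^q\bigr)^{p/q}$ (already for $D_n$ the ratio is $\sim n^{p/q-1}$), so the loss you are trying to absorb is exactly the loss these constructions incur. Second, the ``$L^2$ bridge'' for $p<2$ is backwards: Theorem \ref{th:shapiro} is a \emph{lower} bound on $\int_{-a}^{a}|f|^{2}$, i.e.\ an anti-concentration statement, and cannot control $\int_{^cE}|f|^2$ from above; indeed for $p=2$ concentration fails outright, so no argument routed through an $L^2$ estimate of this kind can produce \eqref{eq:strong-conc}.

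The idea you are missing is the failure of the Hardy--Littlewood majorant property, which the paper uses to build $f$ from scratch rather than to repair an existing concentrated polynomial. One takes a \emph{flat} polynomial $g$ together with a positive-definite majorant $G$ of it ($|\hat g|=\hat G$) such that $\|G\|_p\le\varepsilon\|g\|_q$: for $p<2$, $g=\sum_{k\le n}\eta_ke_k$ with signs chosen via Khintchine's inequality so that $\|g\|_q\gtrsim\sqrt n$, against $G=D_n$ with $\|D_n\|_p=o(\sqrt n)$; for $p>2$ not even, the Mockenhaupt--Schlag pair $g_0=(1+e_j)(1-e_{j+1})$, $G_0=(1+e_j)(1+e_{j+1})$ tensorized by a Riesz-product argument, which is where the restriction $q>q(p)$ comes from (by continuity of the strict inequality $\|G_0\|_p<\|g_0\|_p$ as $p$ is lowered to $q$ on one side). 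Since $E$ is open and symmetric it contains $I\cup(-I)$ for some $I=(\tfrac kN,\tfrac{k+1}N)$; one sets $f(t)=\Delta(t-a)g(2Nt)+\Delta(t+a)g(2Nt)+2\Delta(t)G(2Nt)$ with $\Delta$ a triangle function and $a$ the center of $I$. Positive definiteness is checked coefficientwise ($2+2\eta\cos(2\pi aj)\ge0$), the integral over $^cE$ sees only the $G$-bump at the origin, and $\int_\TT|f|^q$ is bounded below by the $g$-bumps sitting inside $E$. Your model computation with Fej\'er powers is fine as far as it goes, but it only treats the case where $E$ contains a neighborhood of $0$ and sheds no light on the $p<2$ versus $p>2$ dichotomy, whose true source is the majorant inequality, not a tail exponent $\gamma$.
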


The construction is closely related to the failure of Hardy
Littlewood majorant property. We do not know whether, for $p>2$
not an even integer, that is $2k<p<2k+2$,  we can take $q(p)=2k$.
Due to Theorem \ref{th:shapiro}, we cannot take $q(p)<2k$.  We do
not know either whether the next statement is valid for functions
with arbitrary large gaps.

\begin{proof}
Let us first assume that $p<2$. Then, for $D_n$ the Dirichlet
kernel with $n$ sufficiently large depending on $\varepsilon$,
there exists a choice of $\eta_k=\pm 1$  such that
$$\|D_n\|_p\leq \varepsilon \|\sum_{k=0}^n \eta_k e_k\|_q.$$
Indeed, if it was not the case, taking the $q$-th power,
integrating on all possible signs and using Khintchine's
Inequality, we would find that $c\varepsilon\sqrt n \leq
\|D_n\|_p\leq Cn^{1-\frac 1p}$ ($p>1$), $c\varepsilon\sqrt n \leq
\|D_n\|_1\leq C \log n$ and $c\varepsilon\sqrt n \leq
\|D_n\|_p\leq C$ ($0<p<1$) which leads to a contradiction.

We assume that $E$ contains $I\cup (-I)$, where $I:=(\frac kN,
\frac{k+1}N)$, and denote $$g(t):=\sum_{k=0}^n \eta_k
e_k(t)\hspace{2cm} G(t):=D_n(t).$$ Let $\Delta$ be a triangular
function based on the interval $ (-\frac 1{2N}, +\frac{1}{2N})$,
that is, $\Delta(t):=\left (1-2N|t|\right)_+ $. We finally
consider the function
$$f(t):=\Delta(t-a)g(2Nt)+\Delta(t+a)g(2Nt)+2\Delta(t)G(2Nt),$$ where
$a$ is the center of the interval $I$. Then an elementary
computation of Fourier coefficients, using the fact that $\Delta$
has positive Fourier coefficients while the modulus of those of
$g$ and $G$ are equal, allows to see that $f$ is positive
definite. Let us prove that one has (\ref{eq:strong-conc}). The
left hand side is bounded by $\frac 2N \|G\|_p^p$, while $
\int_\TT |f|^q $ is bounded below by $\frac 1{2N}\|g\|_q^q- \frac
2N \|G\|_q^q$. We conclude the proof choosing $n, N$ sufficiently
large.

\medskip

Let us now consider $p>2$ not an even integer. Mockenhaupt and
Schlag in \cite{MS} have given counter-examples to the Hardy
Littlewood majorant conjecture, which are based on the following
property: for $j>p/2$ an odd integer, the two trigonometric
polynomials
$$g_0:=(1+e_j)(1- e_{j+1})\hspace{2cm} G_0:=(1+e_j)(1+ e_{j+1})$$
satisfy the inequality $\|G_0\|_p<\|g_0\|_p$. By continuity, this
inequality remains valid when $p$ is replaced by $q$ in the right
hand side, with $q>q(p)$, for some $q(p)<p$. By a standard Riesz
product  argument, for $K$ large enough, as well as $N_1,
N_2,\cdots N_K$, depending on $\varepsilon$, the functions
$$g(t):=g_0(t)g_0(N_1t)\cdots g_0(N_Kt)\ \ \mbox{\rm and}\ \
G(t):=G_0(t)G _10(N_1t)\cdots G_0(N_Kt)$$ satisfy the inequality
$$\|G\|_p\leq \varepsilon \|g\|_q.$$
From this point the proof is identical.
\end{proof}

We can now state in two theorems the counter-examples that we
obtain for the Wiener conjecture when $p$ is not an even integer.

\begin{theorem}\label{th:noWiener} Let $0<p<\infty$, and $p\notin
2\NN$. Then for any symmetric, measurable set $E\subset\TT$ with
$|E|>0$ and any $q<p$, there exists a function $f$ in the Hardy
space $H^q(\TT)$ with positive Fourier coefficients,  so that its
pointwise boundary value $f^*$  is in $L^p(^cE)$ while $f^*\notin
L^p(\TT)$. Moreover, $f$ can be chosen with gaps  tending to
$\infty$ in its Fourier series.
\end{theorem}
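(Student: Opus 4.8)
The plan is to realize $f$ as a lacunary sum $f=\sum_{j\ge 1} c_j\, e_{M_j}g_j$ of modulated concentration polynomials $g_j\in\TP$, and to force the three required properties by a careful choice of the blocks $g_j$, of the frequencies $M_j$, of the coefficients $c_j>0$ and of the concentration strengths. First I would fix $q<p$ and shrink the range of $q$: since $\TT$ has finite measure, $H^{q'}\subset H^{q}$ whenever $q\le q'$, so it suffices to produce, for some $q'$ with $\max(q,q(p))<q'<p$, a function $f\in H^{q'}$ with positive Fourier coefficients such that $f^*\in L^p({}^cE)$ and $f^*\notin L^p(\TT)$; here $q(p)$ is the exponent of Theorem~\ref{th:strong-conc}, and for $p<2$ one may simply take $q'=q$. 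This removes the apparent gap between the hypothesis (arbitrary $q<p$) and the restriction $q>q(p)$ when $p>2$.

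For the building blocks I would extract from the concentration results polynomials $g_j\in\TP$, with nonnegative spectrum and prescribed large internal gaps, normalized by $\|g_j\|_{q'}=1$ and satisfying simultaneously
\[
\int_{{}^cE}|g_j|^p\le \varepsilon_j,\qquad \|g_j\|_p\ge R_j ,
\]
with $\varepsilon_j\to 0$ and $R_j\to\infty$ at rates fixed below. The two-exponent estimate \eqref{eq:strong-conc} of Theorem~\ref{th:strong-conc} is exactly what makes these compatible: at unit $L^{q'}$ norm it bounds the $L^p$ mass off $E$ by an \emph{absolute} quantity $\varepsilon_j$, while the underlying Dirichlet-kernel (for $p<2$) or Riesz-product (for $p>2$) mechanism lets $\|g_j\|_p$, hence the ratio $\|g_j\|_p/\|g_j\|_{q'}$, grow without bound. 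I would then place the spectra in widely separated dyadic blocks by choosing $M_j$ so fast that the shifted spectrum of $g_j$ lies in a single block $[2^{m_j},2^{m_j+1})$ with the $m_j$ strictly increasing; since $|e_{M_j}g_j|=|g_j|$, this leaves $\|g_j\|_{q'}$, $\|g_j\|_p$ and $\int_{{}^cE}|g_j|^p$ untouched, and it produces both the large inter-block gaps and, together with the internal gaps of the $g_j$, the spectral gaps tending to $\infty$ demanded in the last clause.

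With $c_j$ chosen so that $\sum_j c_j^{\min(q',2)}<\infty$ while $c_jR_j\to\infty$ (for instance $c_j=2^{-j}$, $R_j=4^j$), and $\varepsilon_j$ so small that $\sum_j c_j^{\min(p,1)}\varepsilon_j<\infty$, I would verify the three properties. Finiteness of $\|f\|_{H^{q'}}$ follows from the upper Littlewood--Paley bound applied to the partial sums, $\|S_Jf\|_{H^{q'}}\lesssim\big(\sum_j c_j^{\min(q',2)}\big)^{1/\min(q',2)}$ uniformly in $J$; finiteness of $\int_{{}^cE}|f^*|^p$ follows from $p$-subadditivity on the partial sums together with Fatou's lemma, the partial sums converging to $f^*$ almost everywhere. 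The non-membership $f^*\notin L^p(\TT)$ is where the argument bites: were $f^*\in L^p(\TT)$, then $f\in H^p$, and the Littlewood--Paley inequality valid for all $0<p<\infty$ — with smooth frequency cut-offs equal to $1$ on each well-separated block, so that the $j$-th projection of $f$ is exactly $c_j e_{M_j}g_j$ — would give, for every $j$,
\[
\|f^*\|_{L^p(\TT)}=\|f\|_{H^p}\gtrsim \Big\|\big(\textstyle\sum_i c_i^2|g_i|^2\big)^{1/2}\Big\|_{L^p}\ge c_j\|g_j\|_p\ge c_jR_j,
\]
whose right-hand side tends to $\infty$, a contradiction.

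The main obstacle, I expect, is the manufacture of the blocks $g_j$: one needs the two-exponent concentration \emph{simultaneously} with arbitrarily large internal gaps and for an arbitrary measurable (not merely open) symmetric set $E$. Theorem~\ref{th:strong-conc} supplies the two-exponent inflation but is stated for open sets and is silent about gaps, whereas Theorem~\ref{th:concentration} supplies measurability and gaps but only in single-exponent form, which does not control $\|g_j\|_{q'}$ from above and hence cannot by itself separate the $L^{q'}$ and $L^p$ scales. Reconciling the two — presumably by inspecting the constructions behind \cite{BR} to obtain a gapped, measurable two-exponent version, or by passing through a suitable open piece attached to a density point of $E$ — is the delicate technical point; the remaining bookkeeping (the choice of $c_j,\varepsilon_j,M_j$ and the almost-everywhere convergence justifying Fatou) is routine.
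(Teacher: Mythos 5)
Your overall architecture (a lacunary sum of modulated concentration blocks, with coefficients tuned so that the $H^q$ quasi-norm converges while the $L^p$ norm blows up along the blocks) matches the paper's, but the proposal founders exactly at the point you flag yourself: the manufacture of blocks $g_j$ with $\|g_j\|_{q'}=1$, $\int_{{}^cE}|g_j|^p\le\varepsilon_j$ and $\|g_j\|_p\to\infty$ for an \emph{arbitrary measurable} symmetric set $E$. Theorem~\ref{th:strong-conc} covers only open sets, and your two fallback suggestions (reopen the constructions of \cite{BR}, or pass to an open piece near a density point of $E$) are not carried out; the Remark following Theorem~\ref{th:shapiro} already shows that density points of measurable sets can behave badly in this circle of problems, so the ``open piece'' route is not obviously available. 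This is a genuine gap rather than bookkeeping: without the blocks the whole construction is empty.

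The paper closes the gap with one observation you missed: the single-exponent concentration of Theorem~\ref{th:concentration} --- which \emph{does} hold for measurable sets and with prescribed gaps --- self-improves to a two-exponent estimate once you apply it to a subset $E_k\subset E$ of very small measure $|E_k|<2^{-\alpha k}$. Indeed, H\"older's inequality gives
\[
\int_\TT|f_k|^q\le |E_k|^{1-q/p}\Bigl(\int_{E_k}|f_k|^p\Bigr)^{q/p}+\Bigl(\int_{\TT\setminus E_k}|f_k|^p\Bigr)^{q/p}\le 2\cdot 2^{-kq}\Bigl(\int_\TT|f_k|^p\Bigr)^{q/p}
\]
for $\alpha$ large, so the $L^q$ and $L^p$ scales separate automatically and no two-exponent concentration theorem is needed; in particular the restriction $q>q(p)$ you import from Theorem~\ref{th:strong-conc} is unnecessary here (that theorem is used only for Theorem~\ref{th:strong-noWiener}). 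Taking the $E_k$ disjoint also simplifies the divergence step: after reducing to $q>1$ by replacing $f$ with $f^\ell$, the paper gets $\|f\|_{L^p(E_k)}\ge\|f_k\|_{p}-\sum_j\|f_j\|_{L^p({}^cE_j)}\to\infty$ by the triangle inequality alone, with no need for the Littlewood--Paley square-function machinery your argument invokes (which, for $H^p$ with $p<1$ and exact recovery of each block, would itself require some care to justify). If you adopt the small-disjoint-subsets device, the rest of your scheme goes through essentially as written.
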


 Here
$H^q(\TT)$ denotes the space of periodic distributions $f$ whose
negative coefficients are zero, and such that the function $f_r$
are uniformly in $L^q(\TT)$ for $0<r<1$, where
$$f_r(t):=\sum_{n }\hat f(n)r^{|n|} e^{2i\pi n t}.$$
Moreover, the norm (or quasi-norm) of $f$ is given by
$$\|f\|_{H^q(\TT)}^q:=\sup_{0<r<1}\int_0^1|f_r|^q.$$
 It is well known that, for $f\in H^q(\TT)$, the functions $f_r$
have an a. e. limit $f^*$ for $r$ tending to $1$. The function
$f^*$, which we call  the pointwise boundary value, belongs to
$L^q(\TT)$. When $q\geq 1$, then $f$ is the distribution defined
by $f^*$, and $H^q(\TT)$ coincides with the subspace of functions
in $L^q(\TT)$ whose negative coefficients are zero. In all cases
the space $H^q(\TT)$ identifies with the classical Hardy space
when  identifying the distribution $f$ with the holomorphic
function $\sum_{n\geq 0 }\hat f(n)z^n$ on the unit disc. This
explains the use of the term of boundary value.

The function $f\in H^q$ is said to have gaps (in its Fourier
series) tending to $\infty$ whenever its Fourier series of $f$ can
be written as $\sum_{k=0}^\infty a_k e^{2i\pi n_k x},$ where $n_k$
is an increasing sequence such that $n_{k+1}-n_k\to \infty$ with
$k$.

\smallskip

In opposite to this theorem, recall that for $n_k$ a
\emph{lacunary} series, if the Fourier series is in $L^p(E)$ for
some measurable set $E$ of positive measure, then the function $f$
belongs to all spaces $L^q(\TT)$, see \cite{Z}. This has been
generalized by Miheev \cite{M} to $\Lambda(p)$ sets for $p>2$: if
$f$ is in $L^p(E)$, then $f$ is in the space $L^p(\TT)$. See also
the expository paper \cite{BD}.
\smallskip


\begin{proof}
The key of the proof is  Theorem \ref{th:concentration}. Remark
that we can assume that $p>q>1$. Indeed, $f^\ell$ is a positive
definite function when $f$ is, and counter-examples for some $p>1$
will lead to counter-examples for $p/\ell$. Now, let us take a
sequence $E_k$ of disjoint measurable subsets of $E$ of positive
measure, such that $|E_k|<2^{-\alpha k}$, with $\alpha$ to be
given later and let $f_k$ be a sequence of positive definite
trigonometric polynomials such that
\begin{equation}\label{first}
 \int_{ \TT\setminus E_k} |f_k|^p \leq 2^{-kp } \int_{\TT} |f_k|^p .
\end{equation}
Moreover, we assume that $f_k$'s have gaps larger than $k$. Using
H\"older's inequality, we obtain
\begin{align*}\label{eq:smallernorm}
\int_{\TT}|f_k|^q \leq 2^{-\alpha (1-q/p) k}\left(\int_{E_k}
|f_k|^p\right)^{q/p}+\left(\int_{\TT\setminus E_k}
|f_k|^p\right)^{q/p} \leq  2\times 2^{-{kq}}\left (\int_{\TT}
|f_k|^p\right)^{q/p},
\end{align*}
if $\alpha$ is chosen large enough. Finally, we normalize the
sequence $f_k$ so that $\int_{\TT} |f_k|^p=2^{\frac k{2}}$, and
take
\begin{equation}\label{series}
  f(x):=\sum_{k\geq 1} e^{2i\pi m_k x}f_k(x),
\end{equation}
where the $m_k$ are chosen inductively sufficiently increasing, so
that  the condition on gaps is satisfied. The series is convergent
in $L^q(\TT)$ and in $L^p(^cE)$, and the limit $f$ has its Fourier
series given by  \eqref{series}. Now, let us  prove that $f$ is
not in $L^p(\TT)$. Since the $E_j$'s are disjoint,
$$
\| f\|_{p}\geq \| f\|_{L^p(E_k)} \geq \| f_k\|_{p} - \sum_{j} \|
f_j\|_{L^p(^cE_j)} \geq 2^{\frac k2} - \sum_{j>0} 2^{-\frac j2},
$$ which allows to conclude.
\end{proof}

Using Theorem \ref{th:strong-conc} instead of Theorem
\ref{th:concentration}, we have  the following.
\begin{theorem}\label{th:strong-noWiener}\begin{itemize}\item[(i)]
Let $p>2$, with $p\notin 2\NN$, and let $\ell\in\NN$ such that
$2\ell<p<2(\ell +1)$. Then, for any symmetric open set
$U\subset\TT$ with $|U|>0$ and $q>q(p)$, there exists a positive
definite function $f\in L^{2\ell}(\TT)$, whose negative
coefficients are zero,  such that $f\notin L^q(\TT)$ while $f$ is
in $L^p(^cU)$.
\item[(ii)]
Let $0<p<2$. Then
 for any symmetric open set $U\subset\TT$ with $|U|>0$
and any $s<q<p$, there exists a function $f$ in the Hardy space
$H^{s}(\TT)$ with non negative Fourier coefficients,  so that
$f\notin H^q(\TT)$ while  $f^*$ is  in $L^p(^cU)$.
\end{itemize}
\end{theorem}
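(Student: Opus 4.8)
The plan is to follow the gluing construction from the proof of Theorem~\ref{th:noWiener}, but to feed it with the stronger concentration estimate \eqref{eq:strong-conc} of Theorem~\ref{th:strong-conc} rather than \eqref{eq:concentration}. In both parts I would fix a sequence $U_k$ of pairwise disjoint, symmetric, \emph{open} subsets of $U$ of positive measure, keeping the measures $|U_k|$ as free parameters to be shrunk at the end; for instance, if $(-\delta,\delta)\subset U$ one may take the symmetric annuli $U_k:=\{x:\ \delta/(k+1)<|x|<\delta/k\}$ and then further shrink them. To each $U_k$ I apply Theorem~\ref{th:strong-conc} (in the regime $p>2$, $q>q(p)$ for part (i), and in the regime $p<2$ with exponent $s$ playing the role of the lower index for part (ii)) to produce $f_k\in\TP$ with nonnegative spectrum satisfying the corresponding instance of \eqref{eq:strong-conc} with a small constant $\varepsilon_k$. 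The candidate is then $f:=\sum_{k\ge 1}e_{m_k}f_k$, where the frequencies $m_k$ are chosen inductively, increasing fast enough that the spectra of the summands are pairwise disjoint and contained in $\NN$; this guarantees that $f$ is positive definite with vanishing negative coefficients and that its Fourier coefficients are exactly those of the individual pieces.

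For part (i) I would normalize $\|f_k\|_q=c_k$ with $c_k\to\infty$. H\"older's inequality on ${}^cU_k$ combined with \eqref{eq:strong-conc} bounds both $\|f_k\|_{L^p({}^cU_k)}$ and $\|f_k\|_{L^q({}^cU_k)}$ by a small multiple of $c_k$, so that almost all of the $L^q$ mass of $f_k$ sits on $U_k$. The decisive estimate is that, since $2\ell<q$, H\"older on the set $U_k$ gives $\|f_k\|_{L^{2\ell}(U_k)}\le |U_k|^{1/(2\ell)-1/q}c_k$ with a \emph{positive} exponent on $|U_k|$; shrinking $|U_k|$ therefore makes $\sum_k\|f_k\|_{2\ell}$ convergent, and since $2\ell\ge 2$ the series converges in the Banach space $L^{2\ell}(\TT)$, yielding $f\in L^{2\ell}(\TT)$. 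Choosing $\varepsilon_k$ so that $\sum_k\varepsilon_k^{1/p}c_k<\infty$ makes the series converge in $L^p({}^cU)$ as well, so $f\in L^p({}^cU)$. Finally, because the $U_k$ are disjoint and the tails $\sum_{j\ne k}\|f_j\|_{L^q(U_k)}$ are uniformly bounded by that same convergent sum, one gets $\|f\|_{L^q(U_k)}\ge \tfrac12 c_k-C$ for large $k$, whence $\sum_k\|f\|_{L^q(U_k)}^q=\infty$ and $f\notin L^q(\TT)$.

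Part (ii) is the mirror image. Here I would normalize $\|f_k\|_s=\sigma_k$ with $\sum_k\sigma_k^{\min(s,1)}<\infty$, so that by the $\min(s,1)$-subadditivity of the $H^s$ quasi-norm the series converges in $H^s(\TT)$ and $f\in H^s(\TT)$, while $f^*\in L^p({}^cU)$ exactly as above. To force $f\notin H^q(\TT)$, i.e. $\int_\TT|f^*|^q=\infty$, I use that $s<q$: H\"older on ${}^cU_k$ shows that at least half of $\int_\TT|f_k|^s$ lies on $U_k$, and then Jensen's inequality on $U_k$ gives $\int_{U_k}|f_k|^q\ge |U_k|^{1-q/s}(\tfrac12\sigma_k^s)^{q/s}$, now with a \emph{negative} exponent on $|U_k|$. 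Shrinking $|U_k|$ makes $\int_{U_k}|f_k|^q$ tend to infinity; controlling the tails on each $U_k$ by H\"older and \eqref{eq:strong-conc} as before, I obtain $\int_{U_k}|f^*|^q\to\infty$, so the total integral diverges and $f\notin H^q(\TT)$.

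The conceptual content is entirely in Theorem~\ref{th:strong-conc}, which I take as given; the only real work in the present argument is the bookkeeping that lets the three requirements coexist. The crux, and the step I expect to need the most care, is the exploitation of the exponent mismatch in \eqref{eq:strong-conc}: good global integrability (in $L^{2\ell}$ for (i), in $H^s$ for (ii)) and failure in $L^q$ are produced simultaneously only by playing the smallness of $|U_k|$ against the prescribed growth of the normalizing constants through the H\"older/Jensen factors $|U_k|^{1/(2\ell)-1/q}$ and $|U_k|^{1-q/s}$, while keeping $\sum_k\varepsilon_k^{1/p}c_k$ (respectively the analogous tail sum) finite so that the cross terms on each $U_k$ stay bounded. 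Once the parameters $c_k$ (respectively $\sigma_k$), then $\varepsilon_k$, then $|U_k|$ are fixed in this order, the three conclusions follow as in the proof of Theorem~\ref{th:noWiener}.
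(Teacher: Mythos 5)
Your proposal is correct, and the overall architecture is the same as the paper's: disjoint small symmetric subsets $U_k\subset U$, concentrated positive definite polynomials $f_k$ from Theorem~\ref{th:strong-conc}, and the lacunary gluing $f=\sum_k e_{m_k}f_k$ exactly as in the proof of Theorem~\ref{th:noWiener}. The one genuinely different step is how you obtain $f\in L^{2\ell}(\TT)$ in part (i). The paper first reduces to the case where ${}^cU$ contains a neighborhood of $0$ and then gets global $L^{2\ell}$ membership for free from the \emph{positive} direction of Wiener's theorem (inequality \eqref{shapiro} for the even exponent $2\ell$), so it never needs the series to converge in $L^{2\ell}(\TT)$ in norm. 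You instead prove norm convergence directly via the H\"older factor $|U_k|^{1/(2\ell)-1/q}$; this is self-contained and avoids invoking Theorem~\ref{th:shapiro}, but it silently uses $q>2\ell$ — which does hold, since the paper observes that necessarily $q(p)\ge 2\ell$ (again by Theorem~\ref{th:shapiro}), so you should make that dependence explicit. In part (ii) the difference is only a renormalization: the paper fixes $\|f_k\|_q=2^{k/2}$ and uses the smallness of $|U_k|$ to make $\sum_k\|f_k\|_{H^s}^s$ converge (after reducing to $q<1$ via the $f^\ell$ trick), whereas you fix $\|f_k\|_s=\sigma_k\to0$ and push the divergence into the reverse-H\"older factor $|U_k|^{1-q/s}$; these are the same construction scaled differently, and your version handles general $s<q<p<2$ without the reduction. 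Your ordering of parameter choices ($c_k$ or $\sigma_k$, then $\varepsilon_k$, then $|U_k|$, then $f_k$) is consistent because all the H\"older estimates hold for whatever $f_k$ Theorem~\ref{th:strong-conc} produces.
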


 \begin{proof} Let us first prove $(i)$. We can assume that $^cU$ contains a neighborhood of $0$. So, by Wiener's property,
if $f$ is integrable and belongs to $L^p(^cU)$, then $f$ is in
$L^{2\ell}(\TT)$. Let us prove that there exists such a function,
whose Fourier coefficients satisfy the required properties, and
which does not belong to $L^q(\TT)$. The proof follows the same
lines as in the previous one.  By using Theorem
\ref{th:strong-conc}, we can find positive definite polynomials
$f_k$ such that $\|f_k\|_q=2^{k/2}\to \infty$, while $\|
f_k\|_{L^p(^cU_k)}\leq 2^{-k}$ with $U_k\subset U$ disjoint and of
sufficiently small measure, so that
$\sum\|f_k\|_{L^p(^cU)}<\infty$. As before, the function $
f:=\sum_{k\geq 1} e_{ m_k}f_k$ will have the required properties.

Let us now consider $1\leq p<2$, from which we conclude for
$(ii)$: if $p< 1$, we look for a function of the form $f^{\ell}$,
with $f$ satisfying the conclusions for $\ell p$, with $\ell$ such
that $1\leq \ell p<2$. We can assume that $q< 1$. We proceed as
before, with  $f_k$'s  given by Theorem \ref{th:strong-conc}, such
that   $\|f_k\|_q=2^{k/2}$ and $\| f_k\|_{L^p(^cU_k)}\leq
2^{-k/2}$. The $U_k$'s are assumed to be disjoint and of small
measure, so that $\sum_k \|f_k\|_{H^s}^s<\infty$. It follows
 that $f\in H^s(\TT)$. Remark that
$f$ is not a function, in general, but a distribution. Recall that
$f^*$ is the boundary value of the corresponding holomorphic
function. We write as before
$$
\| f\|_{H^q(\TT)}^q\geq \| f^*\|_{L^q(U_k)}^q \geq \| f_k\|_{q}^q
- \sum_{j} \| f_j\|_{L^q(^cU_j)}^q \geq 2^{\frac {kq}2} -
\sum_{j>0} 2^{-\frac {jq}2},
$$ which allows to conclude for the fact that $f$ is not in
$H^q(\TT)$.

\begin{remark} As Wainger in \cite{W}, we can prove a little more:
the function $f$ may be chosen such that  $\sup_{r<1}|f_r|$ is in
$L^p(^cU)$. Let us give the proof in the case $(i)$. We can assume
that $U$ may be written as $I\cup(-I) $
 for some interval $I$. Let $J$ be the interval of same center and
 length half, and take $f$ constructed as wished, but for the open
 set $J\cup(-J)$. Finally, write $f=\phi+\psi$, with $\phi:=f\chi_{^c\left(J\cup
 (-J)\right)}$. Then using the maximal theorem we know that $\sup_{r<1}|\phi_r|\in
 L^p(\TT)$, while the Poisson kernel $P_t(x-y)$ is uniformly bounded
 for $x\notin U$ and $y\in J\cup(-J)$, so that $\sup_{r<1}|\psi_r|$ is uniformly bounded outside $U$.

 In the case $(ii)$, the proof is more technical, $f$ being only a
 distribution. We use the fact that derivatives
 of the Poisson kernel $P_t(x-y)$ are also uniformly bounded
 for $x\notin U$ and $y\in J\cup(-J)$.
 \end{remark}

\end{proof}

\end{document}